\newcounter{satznum}
\newtheorem{theorem}{Theorem}[satznum]
\newtheorem{lemma}[theorem]{Lemma}
\newenvironment{acknowledgement}
 {\begin{trivlist}\item[]{\bf Acknowledgement.}}
 {\end{trivlist}}
\newenvironment{remark}
 {\begin{trivlist}\item[]{\bf Remark.}}
 {\end{trivlist}}
\gdef\nz{{\mathbb N}} 
\renewcommand{\P}{{\mathbb P}}
\gdef\nm{{\left\{1,\ldots,n\right\}}}
\begin{document}
\section*{Minimal clade size in the Bolthausen-Sznitman coalescent}
   {\sc F. Freund, A. Siri-J\'egousse}

   \begin{center}
      \tt \today \\
      
   \end{center}
\begin{abstract}
This article shows the asymptotics of distribution and moments of the size $X_n$ of the minimal clade of a randomly chosen individual in a Bolthausen-Sznitman $n$-coalescent for $n\to\infty$. The Bolthausen-Sznitman $n$-coalescent is a Markov process taking states in the set of partitions of $\left\{1,\ldots,n\right\}$, where $1,\ldots,n$ are referred to as individuals. The minimal clade of an individual is the equivalence class the individual is in at the time of the first coalescence event this individual participates in.\\
The main tool used is the connection of the Bolthausen-Sznitman $n$-coalescent with random recursive trees introduced by Goldschmidt and Martin (see \cite{goldschmidtmartin}). This connection shows that $X_n-1$ is distributed as the number $M_n$ of all individuals not in the equivalence class of individual 1 shortly before the time of the last coalescence event. Both functionals are distributed like the size $RT_{n-1}$ of an uniformly chosen table in a standard Chinese restaurant process with $n-1$ customers.We give exact formulae for these distributions.\\  
Using the asymptotics of $M_n$ shown by Goldschmidt and Martin in \cite{goldschmidtmartin}, we see $(\log n)^{-1}\log X_n$ converges in distribution to the uniform distribution on [0,1] for $n\to\infty$.\\  
We provide the complimentary information that $\frac{\log n}{n^k}E(X_n^k)\to \frac{1}{k}$ for $n\to\infty$, which is also true for $M_n$ and $RT_n$.

   \vspace{2mm}

   \noindent Keywords: minimal clade size, Bolthausen-Sznitman $n$-coalescent, Chinese restaurant process

   \vspace{2mm}

   \noindent AMS 2010 Mathematics Subject Classification:
            Primary 60C05;  Secondary 05C80, 60G09, 60F05, 60J27, 92D25; 
\end{abstract}
\subsection{Introduction} \label{sec:intro}
\setcounter{theorem}{0}

The Bolthausen-Sznitman $n$-coalescent is a time-homogeneous Markov process $(\Pi^{(n)}_t)_{t\geq 0}$ whose state space is the set of partitions of $\nm$. The only possible transitions in this process are those in which several blocks of a partition are merged (or coalesced) into one new block. Only one new block can be formed in a transition (no simultaneous mergers). Each $k$-tuple of $b$ present blocks is merged to a new block at rate $\frac{(k-2)!(b-k)!}{(b-1)!}$. The Bolthausen-Sznitman $n$-coalescent is a member of the $\Lambda$-$n$-coalescent family (which were introduced independently by Sagitov \cite{sagitov} and Pitman \cite{pitman}). A $\Lambda$-$n$-coalescent is again a  time-homogeneous, continuous-time Markov process whose state space is the set of partitions of $\nm$. The possible transitions are again mergers of multiple blocks into a new one. Each merger of $k$ blocks among $b$ happens with rate
$$\int_{[0,1]}x^{k-2}(1-x)^{n-k}\Lambda(dx)$$
for a finite measure $\Lambda$ on $[0,1]$. Note that the Bolthausen-Sznitman coalescent has $\Lambda=U_{[0,1]}$, the uniform distribution on $[0,1]$. 
Each $\Lambda$-$n$-coalescent can be represented as a random tree with $n$ leaves $\nm$ and random branch lengths by representing each merger as an internal node in the tree (the branch lengths are then the waiting times for the mergers, time is measured starting from the leaves). 
Also note that a $\Lambda$-$n$-coalescent at time $t$ forms a random exchangeable partition of $\nm$.

The Bolthausen-Sznitman $n$-coalescent was introduced by Bolthausen and Sznitman in 1998 (see \cite{boltsznit}). It has connections to population genetics and physics. In mathematical physics, it appears in the context of spin glasses (see \cite{boltsznit} and \cite{bovierkurkova}). It also seems to be a suitable model for the genealogy of a sample of $n$ alleles/genes/haplotypes in several models for selection in population genetics (see \cite{BDMM1}, \cite{BDMM2}, \cite{berestycki2schweinsb12}, \cite{NeherHallatschek}, \cite{Desai} see also the survey \cite{BrunetDerrida12}). Note that this is in contrast to the standard model for a genealogical tree of such a sample which is Kingman's $n$-coalescent ($\Lambda=\delta_0$, only 2 merger at a time, introduced in \cite{kingman}).  Also note that due to the interpretation of the Bolthausen-Sznitman $n$-coalescent as a genealogical tree, we refer to $\left\{1,\ldots, n\right\}$ as individuals.\\
Here, we focus on the Bolthausen-Sznitman $n$-coalescent as a model for a genealogical tree which depicts the ancestry of $n$ alleles sampled at a genetic locus. 
Since the genealogical tree often is endowed with a mutation structure which is interpreted under the infinitely-many sites model, we assume a locus consisting of many nucleotide sites, for example a gene. Different alleles can thus also be seen as different haplotypes at the according sites. One important information coded in the genealogy is the relatedness of an allele randomly chosen from the sample to the rest of the sample. There are two functionals/statistics of the genealogical tree which transport complementary information about this relatedness.
The first functional is the length $E_n$ of an external branch chosen at random from the $n$ external branches associated with the leaves $\nm$ of the tree, introduced by Fu and Li in \cite{fuli}. $E_n$ gives the time that the chosen allele has to evolve independently of the rest of the sample (e.g., by mutation). This gives a measure of the genetic uniqueness of this allele relative to the rest of the sample. The second functional is the size $X_n$ of the minimal clade containing the randomly chosen allele, introduced by Blum and Fran\c{c}ois in \cite{blumfrancois}. The minimal clade can be defined in different, yet equivalent ways: The minimal clade is 
\begin{itemize}
\item the equivalence class that contains the (randomly chosen) allele $i\in\nm$ at the first time $i$ was merged,
\item all leaves of the subtree rooted at the most recent ancestor of allele $i$,
\item all descendants of the most recent ancestor of allele $i$.
\end{itemize}
The minimal clade can also be seen as the smallest family containing $i$. The size of the minimal clade gives the complementary information how many individuals share the genealogy with allele $i$ "after" time $E_n$ (note that since we measure time from leaves to root, "after" $E_n$ actually means further back in time). 

The external branch length is already analyzed well for several $\Lambda$-$n$-coalescents in the literature. Its distribution follows a recursion and its asymptotics for sample size $n\to\infty$ are known for various $\Lambda$-$n$-coalescents (see \cite{freundmoehle1}, \cite{caliebe}, \cite{blumfrancois}, \cite{gneiksmoe}, \cite{DFSY12}).
 For the minimal clade size, though, only results for Kingman's $n$-coalescent
 ($\Lambda=\delta_0$, only 2 merger at a time)
  are known (including asymptotics for $n\to\infty$, see \cite{blumfrancois}).

The purpose of this paper is to analyze the distribution of the minimal clade size $X_n$ in the case of the Bolthausen-Sznitman $n$-coalescent and its asymptotics for sample size $n\to\infty$. We will exploit the construction of the Bolthausen-Sznitman $n$-coalescent using a random recursive tree introduced by Goldschmidt and Martin (see \cite{goldschmidtmartin}) to prove our results. First, we observe that this construction yields that the process describing the set of relatives of a randomly chosen  individual in the Bolthausen-Sznitman $n$-coalescent  process (which is its equivalence class without the individual itself) is equal in law to the time-reversed process describing the set of non-relatives of the chosen individual (all individuals in different equivalence classes than the chosen individual). This shows that the minimal clade size actually is distributed as the sum $M_n$ of the sizes of all blocks not containing 1 which participate in the last collision in the $n$-coalescent. Convergence in distribution of properly scaled $M_n$ for $n\to\infty$ was shown already by Goldschmidt and Martin in \cite{goldschmidtmartin} and thus the same asymptotic behavior holds for $X_n$, namely $(\log n)^{-1}\log X_n$ converges in distribution to the uniform distribution on [0,1].\\      
Note that due to the connection between the random recursive tree and the standard Chinese Restaurant process, we observe that $X_{n}-1$ and $M_n$ are distributed as the size of a uniformly chosen table (not chosen by a size-biased pick!) in the Chinese restaurant process (again for $M_n$ in accordance to \cite{goldschmidtmartin}). This allows us to give several formulae for the exact distribution of $X_n$. Using these, we show that $\frac{\log n}{n^k}E(X_n^k)\to \frac{1}{k}$ for $n\to\infty$, which gives complementary information to the weak convergence result.

\subsection{Minimal clade size in the Bolthausen-Sznitman $n$-coalescent} \label{sec:boltsznit}
\setcounter{theorem}{0}
Set $[n]:=\nm$ and $[n]_0:=\{0,\ldots,n\}$. For a partition $\eta$ of $[n]$, let $C_i(\eta)$ denote the equivalence class of $i\in[n]$ and $|C_i(\eta)|$ its size. Let  $(\Pi^{(n)}_t)_{t\geq 0}$ be a $\Lambda$-$n$-coalescent. Since we want to look at the minimal clade size of a randomly chosen allele in the sample whose genealogy is given by $(\Pi^{(n)}_t)_{t\geq 0}$, define $I$ as a uniform pick from $[n]$ independent of the $n$-coalescent. Now, first define the length of a randomly chosen external branch (associated with the randomly chosen $I\in[n]$) by
$$E_n:=\inf\{t\geq 0 , C_I(\Pi^{(n)}_t)\neq \{ I\} \}.$$ 
 Now we define the size of the minimal clade of the randomly chosen allele $I$ as 
\begin{equation}\label{def:mincladesize}
X_n:=|C_I(\Pi^{(n)}_{E_n})|.
\end{equation}
Note that, due to exchangeability, we don't change the distributions of $E_n$ and $X_n$ if we assume $I=1$. Also note that due to the interpretation of a $n$-coalescent as a genealogical tree, we refer to $\left\{1,\ldots,n\right\}$ as individuals.\\
From now on, we will abbreviate the size of the minimal clade of $I=1$ with minimal clade size.\\
The minimal clade of individual 1 is the size of the equivalence class of 1 at the first coalescence event that the individual participates in. In \cite{goldschmidtmartin}, Goldschmidt and Martin have analysed the behavior of the total mass $M_n$ of the equivalence classes not containing 1 at the last coalescence event in the Bolthausen-Sznitman $n$-coalescent (see \cite[Thm. 3.1]{goldschmidtmartin}). Note that $M_n$ can also be written as $n-|C_1(\Pi^{(n)}_{\tau_n-})|$, where $\tau_n$ is the waiting time for the last coalescence event. Both $X_n$ and $M_n$ are functionals of the equivalence class of 1 in the Bolthausen-Sznitman $n$-coalescent at different times. Thus, it's interesting how the equivalence class of 1 changes over time. It will only grow by merging with other equivalence classes at coalescence times, but not necessarily at all coalescence times. We define $S^{(n)}_i$ as the equivalence class of 1 after the $i$th merging event which 1 participates in. What are the properties of $(S^{(n)}_i)_{i\in[\kappa_n]_0}$, where $\kappa_n$ is the number of merging events 1 participates in? The results from \cite{goldschmidtmartin} answer this question. There, the authors show a construction of the Bolthausen-Sznitman $n$-coalescent by applying a cutting procedure to a random recursive tree and use it, among other questions, to analyse $M_n$.\\  
We will show in detail that this construction enables us to analyse the behaviour of $S^{(n)}$ and that it can be expressed in terms of a Chinese retaurant process. Note that this is just the line of reasoning from \cite{goldschmidtmartin}. Let's quickly recall the construction of the Bolthausen-Sznitman $n$-coalescent from a random recursive tree from \cite[Prop. 2.2]{goldschmidtmartin} as well as the connection to the Chinese restaurant process. Here, we give a simplified version just constructing the jump chain of the $n$-coalescent.\\
We start with a random recursive tree with $n$ vertices, i.e. a uniformly distributed random variable on the set of all recursive trees with $n$ vertices $1,\ldots,n$ rooted in 1 (here, the branches carry no length information). Now construct the jump chain as follows.
\begin{enumerate}
\item Choose an edge at random
\item Cut the tree at this edge. All labels that are in the subtree not containing the root are added to the node of the subtree containing the root which was adjacent to the cut edge.
\item Define a partition by taking the labels at each node of the subtree containing the root. This partition has the same law as the Bolthausen-Sznitman $n$-coalescent after the first jump.
\item Repeat subsequently steps 1-3 with the subtree containing the root. This leads to partitions which have the same law as the Bolthausen-Sznitman $n$-coalescent after the 2nd, 3rd, $\ldots$ jump.  
\end{enumerate} 
We now come to the connection of the random recursive tree with the Chinese restaurant process.\\
First, recall that the standard Chinese restaurant process is a sequential construction of a uniform permutation of $[n]$. Imagine a restaurant with tables $1,2,3,\ldots$ with infinitely many chairs. $n$ customers $1,\ldots,n$ sit down at the tables after the following rule:
\begin{itemize}
\item customer 1 sits at table 1,
\item if $i-1$ customers have taken their seat, the $i$th customer sits with equal probability at one of the following $i$ places: 
\begin{itemize}
\item on a chair directly to the left of an already seated customer (possibly between customers),  
\item at a previously unoccupied table. 
\end{itemize}      
\end{itemize}
Writing down the customers at each table in seating order, we get the cycles of a uniform random permutation of $[n]$. If we only record the customers at each table, but not the seating order, we get an exchangeable partition of $[n]$ whose distribution is given by Ewens sampling formula. More information on this process can be found in \cite[Ch. 3.1]{pitmancombstoch}. We will abbreviate a standard Chinese restaurant process with $n$ customers by $CRP(n)$.\\
A $CRP(n-1)$ can be found in a random recursive tree with $n$ vertices in the following way (see \cite[p. 724-725]{goldschmidtmartin}). We define a subtree of '1' in the random recursive tree as a rooted subtree whose root is adjacent (connected by one edge) to the root '1' of the whole tree. Then the subtrees of '1' form a exchangeable partition of $\left\{2,\ldots,n\right\}$ which can be described as a $CRP(n-1)$ with customers labelled $2,\ldots,n$. The following lemma just is a write-up of the line of reasoning from \cite[p. 725]{goldschmidtmartin} and gives a discrete analogon of a part of \cite[Cor. 16]{pitman} (in \cite{goldschmidtmartin}, the line of reasoning presented here is a part of an alternative proof for \cite[Cor. 16]{pitman})
\begin{lemma}(practically from Goldschmidt, Martin)\label{lem:blockof1}  
Let $\kappa_n$ be the number of collisions in a Bolthausen-Sznitman $n$-coalescent individual 1 participates in. For $i\in[\kappa_n]_0$, let $S^{(n)}_i$  be the equivalence class of 1 in the Bolthausen-Sznitman $n$-coalescent after the $i$th collision. For a $CRP(n-1)$ with $K_{n-1}$ tables, let $RT_1,\ldots,RT_{K_{n-1}}$ be the tables in random order. Then $S^{(n)}=(S^{(n)}_i)_{i\in[\kappa_n]_0}$ is distributed as $(\left\{1\right\}\cup\bigcup_{j\in [i]}RT_j)_{i\in[K_{n-1}]}$.\\ 
Moreover, the process $S^{(n)}\setminus \left\{1 \right\}=(S^{(n)}_i\setminus \left\{1 \right\})_{i\in[\kappa_n]_0}$ giving the relatives of individual 1 through time is distributed as the time-reversed process $[n]\setminus S^{(n)}=([n]\setminus S^{(n)}_{\kappa_n-i})_{i\in[\kappa_n]_0}$ giving the nonrelatives of individual 1.    
\end{lemma}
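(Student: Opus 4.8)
The plan is to carry out the entire argument inside the Goldschmidt--Martin coupling of the random recursive tree on $n$ vertices with the jump chain of the Bolthausen--Sznitman $n$-coalescent, tracking how the block of the root $1$ grows under the successive cuts.

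The first step is a structural observation about which cuts make the block of $1$ grow. The block of $1$ changes exactly when the cut edge is incident to the root: such a cut transfers the labels of one entire subtree of $1$ into the block of $1$. A cut of an edge not incident to the root takes place strictly inside one subtree of $1$; it reorganises that subtree but leaves the label set of every subtree of $1$ unchanged and does not touch the block of $1$. Hence the collisions in which $1$ participates are precisely the cuts of root-incident edges. Since the recursive tree starts with $K_{n-1}$ subtrees of $1$ and each is removed by exactly one root-incident cut, we get $\kappa_n=K_{n-1}$, and after its $i$-th collision the block of $1$ equals $\{1\}$ together with the first $i$ subtrees of $1$ that have been absorbed, taken in absorption order.

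The second step, and the point that needs the most care, is to show that the subtrees of $1$ are absorbed in uniformly random order, independent of their sizes. The naive difficulty is that a root-incident cut deletes a whole subtree's worth of edges at once, so the rates at which internal cuts occur differ between subtrees of different sizes, and one might fear that larger subtrees are systematically absorbed later. The resolution is that throughout the procedure each not-yet-absorbed subtree of $1$ is attached to the root by exactly one edge, irrespective of its size or internal structure, and the set of root-incident edges only ever shrinks, by deletion of the edge that is cut. Therefore, conditionally on the next cut being root-incident, it is equally likely to be the root-edge of any surviving subtree; an induction on the collisions of $1$ then shows the absorption order is a uniform permutation of the subtrees of $1$, independent of the partition of $\{2,\ldots,n\}$ they induce. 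Because that partition is a $CRP(n-1)$, the subtrees listed in absorption order are distributed as the CRP tables in uniform random order, that is as $RT_1,\ldots,RT_{K_{n-1}}$, which yields $S^{(n)}_i=\{1\}\cup\bigcup_{j\in[i]}RT_j$ and proves the first assertion.

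For the time-reversal statement I would write both processes in terms of the ordered tables. The relatives process is $S^{(n)}_i\setminus\{1\}=\bigcup_{j\in[i]}RT_j$, the union of the first $i$ tables, while the nonrelatives after the $m$-th collision are $[n]\setminus S^{(n)}_m=\bigcup_{j=m+1}^{K_{n-1}}RT_j$; substituting $m=\kappa_n-i=K_{n-1}-i$ shows the time-reversed nonrelatives process $[n]\setminus S^{(n)}_{\kappa_n-i}$ is the union of the last $i$ tables. Reversing a uniform random order is again a uniform random order, so $(RT_1,\ldots,RT_{K_{n-1}})$ and $(RT_{K_{n-1}},\ldots,RT_1)$ have the same law; feeding the reversed order into the map ``union of the first $i$ tables'' shows the union-of-last-$i$ process has the same distribution as the union-of-first-$i$ process, which is exactly the asserted equality in distribution between the time-reversed nonrelatives and the relatives. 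The only genuine obstacle is the uniform-order claim of the second step; once the single-edge representation of each subtree is isolated, the remainder is bookkeeping.
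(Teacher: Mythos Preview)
Your argument is correct and follows exactly the paper's approach: use the Goldschmidt--Martin cutting construction, observe that the block of $1$ grows precisely by absorbing the subtrees of $1$ (one per root-incident cut) in uniformly random order, and identify those subtrees with the $CRP(n-1)$ tables. You are in fact more thorough than the paper's own three-line proof, which simply asserts the uniform-order claim with the phrase ``since the edges are chosen at random'' and leaves the time-reversal deduction entirely to the reader; your justification of uniformity via the single root-edge per surviving subtree and your derivation of the reversal by swapping $(RT_1,\ldots,RT_{K_{n-1}})$ for $(RT_{K_{n-1}},\ldots,RT_1)$ are both sound.
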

If the Bolthausen-Sznitman $n$-coalescent is constructed via cutting a random recursive tree, this lemma can be described more graphically: The equivalence class of 1 grows by adding tables chosen uniformly at random from the Chinese restaurant process  with $n-1$ customers given by the subtrees of '1' in the random recursive tree.\\
Note that we actually chose tables at random not individuals sitting at tables, so we don't make size-biased picks.  
\begin{proof}
We construct the Bolthausen-Sznitman $n$-coalescent via cutting a random recursive tree. The equivalence class of 1 is merged with other equivalence classes as soon as an edge adjacent to  the root is cut in the random recursive tree. The equivalence class of 1 is then merged with the subtree of '1' which is connected by that edge. Since the edges are chosen at random, this means that a uniformly chosen table of the $CRP(n-1)$ given by the subtrees of '1' is merged with the class of 1.
\end{proof}
Since $X_n-1=|S^{(n)}_1\setminus \left\{1 \right\}|$ and $M_n=|[n]\setminus S^{(n)}_{\kappa_n-1}|$, Lemma \ref{lem:blockof1} shows that $X_n-1$ and $M_n$ have the same distribution, namely that both are distributed as the size of a uniformly chosen table in a $CRP(n-1)$. This means that the known results for the asymptotics of $M_n$  which are given in \cite[Thm. 3.1]{goldschmidtmartin} are valid for $X_n-1$ and due to a Slutski argument are also valid for $X_n$. 
\begin{theorem}\label{thm:minclade}
Let $n\in \{ 2,3\,\ldots\}$. Let $X_n$ be the minimal clade size  in the Bolthausen-Sznitman $n$-coalescent. $X_n$ is  distributed on $2,\ldots,n$. $X_n$ is distributed as the size of a randomly chosen table in a $CRP(n-1)$ reduced by 1 and 
$$\frac{\log X_n}{\log n} \rightarrow U_{[0,1]}$$
holds in distribution for $n\to\infty$, where $U_{[0,1]}$ is the uniform distribution on $[0,1]$.
\end{theorem}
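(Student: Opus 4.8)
The plan is to read off all three assertions from Lemma~\ref{lem:blockof1} together with the asymptotics of $M_n$ established in \cite[Thm.~3.1]{goldschmidtmartin}, so that the only genuinely new work is bookkeeping and a Slutsky-type argument to absorb the additive offset by $1$. First I would pin down the support. By definition $X_n=|C_I(\Pi^{(n)}_{E_n})|$ is the size of the equivalence class of the chosen individual at the first collision it participates in; since every collision in a $\Lambda$-$n$-coalescent merges at least two blocks, the class of $1$ right after this first merger contains $1$ together with at least one further block, so $X_n\geq 2$, while $X_n\leq n$ is trivial. Hence $X_n$ takes values in $\{2,\ldots,n\}$.

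Next I would extract the exact distribution. Lemma~\ref{lem:blockof1} gives $S^{(n)}=(S^{(n)}_i)_i$ distributed as $(\{1\}\cup\bigcup_{j\in[i]}RT_j)_i$, where $RT_1,\ldots,RT_{K_{n-1}}$ are the tables of a $CRP(n-1)$ taken in a \emph{uniformly random} order; in particular $S^{(n)}_1\stackrel{d}{=}\{1\}\cup RT_1$, and $RT_1$ is a table chosen uniformly at random (not by a size-biased pick). Since $X_n-1=|S^{(n)}_1\setminus\{1\}|$, this yields
$$X_n-1\;\stackrel{d}{=}\;|RT_1|,$$
i.e. $X_n-1$ is distributed as the size of a uniformly chosen table in a $CRP(n-1)$, which is the claimed exact law. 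The time-reversal part of the same Lemma identifies this quantity with $M_n$, so that $X_n-1\stackrel{d}{=}M_n$.

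Finally I would transfer the weak limit. From $X_n-1\stackrel{d}{=}M_n$ and \cite[Thm.~3.1]{goldschmidtmartin} I obtain $(\log n)^{-1}\log(X_n-1)\to U_{[0,1]}$ in distribution. To pass to $\log X_n$ I would write
$$\frac{\log X_n}{\log n}=\frac{\log(X_n-1)}{\log n}+\frac{1}{\log n}\,\log\!\Big(\frac{X_n}{X_n-1}\Big),$$
and observe that the correction term is \emph{deterministically} controlled: because $X_n\geq 2$ we have $0\leq \log\!\big(X_n/(X_n-1)\big)\leq\log 2$, so the correction is bounded by $(\log 2)/\log n\to 0$. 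A Slutsky argument then gives $(\log n)^{-1}\log X_n\to U_{[0,1]}$ in distribution, completing the proof.

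The heavy analytic input, namely the convergence of $(\log n)^{-1}\log M_n$ to $U_{[0,1]}$, is imported wholesale from \cite{goldschmidtmartin}, so within this paper the only place demanding care is the last step. The hard part will be verifying that the shift by $1$ is asymptotically negligible on the logarithmic scale; this is exactly what the uniform bound above guarantees, and it (together with the insistence that the table be chosen uniformly rather than size-biased) is the main, if modest, obstacle.
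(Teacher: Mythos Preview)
Your proposal is correct and follows essentially the same route as the paper: use Lemma~\ref{lem:blockof1} to identify $X_n-1$ with the size of a uniformly chosen table in a $CRP(n-1)$ (and hence with $M_n$), import the weak convergence of $(\log n)^{-1}\log M_n$ from \cite[Thm.~3.1]{goldschmidtmartin}, and pass from $X_n-1$ to $X_n$ via a Slutsky argument. The paper leaves the Slutsky step as a one-word remark, whereas you spell out the deterministic bound $0\le\log(X_n/(X_n-1))\le\log 2$; this is a harmless elaboration and not a different method.
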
  
Additionally to this result, we give the complementary information of the
exact law of $X_n$ and of the first order behaviour of all moments of $X_n$ for $n\to\infty$. For this, we need more knowledge about the distribution of $X_n$.

Theorem \ref{thm:minclade} states that the distribution of $X_n$ can be expressed in terms of the Chinese restaurant process. We will use this to derive three formulae for the distribution of $X_n$. Let's recall two possibilities to look at the distribution of customers at tables in a $CRP(n)$. It is well known that this distribution in a $CRP(n)$ is given by the celebrated Ewens sampling formula with mutation parameter $\theta=1$ (e.g., see \cite[eq. 1.3]{arratiabarbourtavare}). We use two different possibilities to look at the Ewens sampling formula in equations \eqref{eq:ESF1} and \eqref{eq:ESF2}. First, we can record how many tables in a $CRP(n)$ have exactly $i$ customers, which we denote by $A^{(n)}_i$, for each $i\in[n]$. Then for $a_1,\ldots,a_n\in[n]_0$ with $\sum_{i\in[n]}ia_i=n$, we have
\begin{equation}\label{eq:ESF1}
\P(A^{(n)}_1=a_1, \dots,A^{(n)}_n=a_{n})=\prod_{i=1}^{n}\frac{1}{a_i!i^{a_i}}.
\end{equation} 
On the other hand, we can record the probability that certain sets of customers sit at tables $1,2,\ldots$ (this forms a partition $\eta$ of $[n]$). The probability that we find a certain partition $\eta$ 
(with blocks ordered by their least element)
of $[n]$ with $k$ occupied tables and $n_i$ customers at the $i$th occupied table is
\begin{equation}\label{eq:ESF2}
P(CRP(n)=\eta)=\frac{1}{n!}\prod_{i\in[k]}(n_i-1)!.
\end{equation}
This leads to several possibilities to express the distribution of $X_n$.     
\begin{lemma}\label{lem:distclade}
Let $n\in\left\{2,3,...\right\}$. Let $X_n$ be the minimal clade size in a Bolthausen-Sznitman $n$-coalescent. For $m\in\nz$, let $A^{(m)}_i$ be the number of tables with exactly $i$ customers in a $CRP(m)$ and $K_{m}=\sum_{i\in[n-1]}A^{(m)}_i$ the number of occupied tables. Define $K_0=0$ ('empty restaurant') Then for $j\in[n-1]$ 
\begin{itemize}
\item[a)] Denoting 
$\Gamma_{n}=\{a_1,\ldots,a_{n-1}\in[n-1]_0 , \sum_{i=1}^{n-1}ia_i=n-1\}$
		\begin{align*}
			P(X_n=j+1)&=E\left(\frac{A^{(n-1)}_j}{K_{n-1}}\right)\\
		      		  &=\sum_{\Gamma_{n}}\frac{a_j}{\sum_{i=1}^{n-1}a_i}\prod_{i=1}^{n-1}\frac{1}{a_i!i^{a_i}}.
		\end{align*}      
		      
\item[b)] Denoting
$\Delta({n},k)=\{n_1,\ldots ,n_k\in[n], \sum_{i=1}^k n_i=n\}$ for $k\leq n$,
$$P(X_n=j+1)= 
 \frac{1}{j}\sum_{k=1}^{n-1-j}\frac{1}{(k+1)!}\sum_{\Delta(n-1-j,k)}\frac{1}{n_1\cdots n_k} $$ 
 for $j<n-1$
 and $P(X_n=n)=\frac{1}{n-1}$.

\item[c)] Let $B_1,B_2,\dots$ be independent Bernoulli-distributed random variables with success probability $\frac{1}{i}$ for $B_i$.
 \begin{align*}
				P(X_n=j+1)&=\frac{1}{j}E\left(\frac{1}{1+K_{n-1-j}}\right)\\
				&=\frac{1}{j}E\left(\frac{1}{1+\sum^{n-1-j}_{i=1}B_i}\right),
		  \end{align*} 
		  
\end{itemize}
\end{lemma}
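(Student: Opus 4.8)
The plan is to carry out everything on the Chinese restaurant side. By the discussion preceding the lemma, $X_n-1$ is distributed as the size of a table chosen uniformly (not size-biasedly) among the tables of a $CRP(n-1)$; writing $m:=n-1$ it therefore suffices to compute the probability that a uniformly chosen table of a $CRP(m)$ has size $j$, and then read off $P(X_n=j+1)$. For part a) I would condition on the full configuration $(A^{(m)}_1,\ldots,A^{(m)}_m)$: given this, there are $K_m=\sum_i A^{(m)}_i$ tables of which $A^{(m)}_j$ have size $j$, so a uniform pick lands on a size-$j$ table with conditional probability $A^{(m)}_j/K_m$, and taking expectations gives the first line. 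Substituting the Ewens weights of \eqref{eq:ESF1} over the index set $\Gamma_n$ (note $\sum_i i a_i=m=n-1$ there, and $K_m=\sum_i a_i$) then produces the explicit sum.

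For part b) I would instead decompose on the \emph{content} of the chosen table. Drawing the partition according to \eqref{eq:ESF2} and then an independent uniform table, the joint weight of selecting a given table $T$ with $|T|=j$ while the remaining customers form a partition with $k$ further blocks of sizes $n_1,\ldots,n_k$ is $\frac{1}{k+1}\cdot\frac{1}{m!}(j-1)!\prod_{i=1}^k(n_i-1)!$, where $k+1$ is the total number of tables. The core step is summing this over all admissible pairs (chosen table, partition of the complement). For a fixed ordered composition $(n_1,\ldots,n_k)\in\Delta(m-j,k)$ there are $\binom{m}{j}\frac{(m-j)!}{n_1!\cdots n_k!}=\frac{m!}{j!\,n_1!\cdots n_k!}$ ways to choose $T$ together with ordered blocks, and each unordered configuration is counted $k!$ times. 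Using $(n_i-1)!/n_i!=1/n_i$, the factorials $m!$ cancel and each block contributes a factor $1/n_i$, leaving $\frac{1}{j}\sum_{k}\frac{1}{(k+1)!}\sum_{\Delta(m-j,k)}\frac{1}{n_1\cdots n_k}$ with $m-j=n-1-j$. The boundary case $j=m$ (empty complement, $k=0$, a single table) yields the term $\frac{1}{m}=\frac{1}{n-1}$ and is recorded separately as $P(X_n=n)$.

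For part c) I would recognize the inner block-sum as a table-count probability. Running the same ordered-block count for a $CRP(m')$ with $m':=n-1-j$ gives $P(K_{m'}=k)=\frac{1}{k!}\sum_{\Delta(m',k)}\frac{1}{n_1\cdots n_k}$, hence $\sum_{\Delta(m',k)}\frac{1}{n_1\cdots n_k}=k!\,P(K_{m'}=k)$. Substituting this into the formula from b) collapses the double sum, since $\frac{1}{(k+1)!}k!=\frac{1}{k+1}$, into $\sum_k\frac{1}{k+1}P(K_{m'}=k)=E\!\left(\frac{1}{1+K_{m'}}\right)$, which is the first identity of c) (with the convention $K_0=0$ covering $j=n-1$). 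The second identity then follows from the standard fact that in a $CRP(m')$ the $i$th customer opens a new table independently with probability $1/i$, so $K_{m'}$ is distributed as $\sum_{i=1}^{m'}B_i$ with the $B_i$ independent Bernoulli$(1/i)$.

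I expect the main obstacle to be the bookkeeping in part b): cleanly separating the distinguished chosen table from the remaining blocks, matching ordered compositions $\Delta(m-j,k)$ to unordered set partitions through the $k!$ overcount factor, and checking that the multinomial counts combine with the Ewens weights of \eqref{eq:ESF2} so that $m!$ cancels and each block reduces to $1/n_i$. Once b) is established, part a) is immediate from \eqref{eq:ESF1}, and part c) is a short reinterpretation of the very same count as the law of $K_{m'}$.
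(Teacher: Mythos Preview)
Your proposal is correct and follows essentially the same approach as the paper. The only difference is in the packaging of part~b): the paper first derives the joint law of the table sizes in \emph{exchangeable random order} (their equation~\eqref{eq:ESFexch}, $P(N_1=n_1,\ldots,N_k=n_k)=\frac{1}{k!\,n_1\cdots n_k}$) and then reads off the marginal of $N_1$, whereas you do the equivalent multinomial bookkeeping directly from~\eqref{eq:ESF2}; both routes perform the same cancellation and yield the same formula, and part~c) is then identical in both treatments.
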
 
Note that above lemma also holds true for $M_n$ and the size $RT_{n-1}$ of a randomly chosen table in a $CRP(n-1)$ (just replace $j+1$ with $j$).
Also note that this result provides a very rare example where an exact law is obtained for a functional of an exchangeable non-Kingman, non-starshaped $n$-coalescent.
\begin{proof}
Due to Theorem \ref{thm:minclade}, we know that $X_n-1$ is distributed as the size of a randomly chosen table in a $CRP(n-1)$. Given the table counts $A^{(n-1)}_1,\ldots,A^{(n-1)}_{n-1}$, the probability that we randomly choose a table with $j$ customers is $\frac{A^{(n-1)}_j}{\sum_{i=1}^{n-1}A^{n-1}_i}=\frac{A^{(n-1)}_j}{K_{n-1}}$. Summing over the distribution of the table counts given by (\ref{eq:ESF1}) gives a).\\
Now look at the partition $\eta$ of $[n]$ constructed via a $CRP(n-1)$ whose distribution is given by \eqref{eq:ESF2}. We are interested in the partition not in order of least elements, but in exchangeable order (meaning that if the partition has $k$ blocks, we order them randomly). Let $N^{(n-1)}_1,\ldots, N^{(n-1)}_k$ be the table sizes in exchangeable order. By combinatorial arguments (see \cite[(2.7)]{pitmancombstoch}), we get
\begin{align}\label{eq:ESFexch}
P(N_1^{(n-1)}=n_1,\ldots,N^{(n-1)}_k=n_k)&=\binom{n-1}{n_1,\ldots,n_k}\frac{1}{k!}\underbrace{\frac{1}{(n-1)!}\prod_{i=1}^k(n_i-1)!}_{\mbox{prob. of $\eta$, least elements}}\nonumber\\
&=\frac{1}{k!\prod_{i=1}^k n_i}.
\end{align}  
 The size of a randomly picked table in the CRP is distributed as $N^{(n-1)}_1$. This is just the marginal distribution from above formula, namely
\begin{equation}\label{eq:prob2clade}
P(X_n=j+1)=P(N^{(n-1)}_1=j)
=\sum_{k=1}^{n-1}\frac{1}{k!}\sum_{n_2,\ldots ,n_k\in[n-1]\atop j+\sum_{i=2}^n n_i=n-1}\frac{1}{j\cdot n_2\cdots n_k}.
\end{equation}
If $j=n-1$, \eqref{eq:prob2clade} equals $\frac{1}{n-1}$. For $1\leq j<n-1$, we have
\begin{align*}
P(X_n=j+1)&=\sum_{k=2}^{n-j}\frac{1}{k!j}\sum_{n_2,\ldots ,n_k\in[n-1-j]\atop \sum_{i=2}^k n_i=n-1-j}\frac{1}{n_2\cdots n_k}\\
&=\frac{1}{j}\sum_{k=1}^{n-1-j}\frac{1}{(k+1)!}\sum_{\Delta(n-1-j,k)}\frac{1}{n_1\cdots n_k},
\end{align*}
where the last equation is due to an index shift. This shows b).\\
 To show c), we compare \eqref{eq:prob2clade} with $E((1+K_{n-1-j})^{-1})$. First note that for $j=n-1$, we have $K_0=0$ and thus $$\frac{1}{n-1}E\left(\frac{1}{K_0+1}\right)=\frac{1}{n-1},$$ which matches the expression in b). Now assume $1\leq j<n-1$. If we look at the table sizes in exchangeable order, we can compute $P(K_{n-1-j}=k)$ by summing up the probabilities of all possible configurations of table sizes of exactly $k$ occupied tables in a $CRP(n-1-j)$. Using \eqref{eq:ESFexch}, this leads to  
\begin{align*}
 E\left(\frac{1}{1+K_{n-1-j}}\right)&=\sum_{k=1}^{n-1-j} \frac{1}{k+1}\sum_{\Delta(n-1-j,k)}\frac{1}{k!n_1\cdots n_k}
 \\&=\sum_{k=1}^{n-1-j} \frac{1}{(k+1)!}\sum_{\Delta(n-1-j,k)}\frac{1}{n_1\cdots n_k}.
\end{align*}   

Comparison with \eqref{eq:prob2clade} yields 
$$P(X_n=j+1)=\frac{1}{j}E\left(\frac{1}{1+K_{n-1-j}}\right).$$
Recall that $K_{n-1-j}$ is distributed as the number of cycles in a uniform permutation of $[n-1-j]$. It is well-known that the number of cycles is distributed as the sum of
 independent Bernoulli variables $B_1,\ldots, B_{n-1-j}$ with success probability $\frac{1}{i}$ for $B_i$ (e.g., see \cite[p.10]{arratiabarbourtavare}). This proves c).  
\end{proof}
\begin{remark}
Let $K_n$ be the number of occupied tables in a $CRP(n)$. Using $K_n\stackrel{d}{=}\sum_{i\in[n]}B_i$ for independent Bernoulli variables with success probability $\frac{1}{i}$, we deduce the recursion
$$E\left(\frac{1}{m+K_{n}}\right)=(1-\frac{1}{i})E\left(\frac{1}{m+K_{n-1}}\right)+\frac{1}{i}E\left(\frac{1}{m+1+K_{n-1}}\right)$$
for all $m\in\nz_0$. This recursion gives an efficient method to compute the distribution of the minimal clade size $X_n$ by using the representation in Lemma \ref{lem:distclade} c).  
\end{remark}
\begin{remark}
In \cite{goldschmidtmartin}, Goldschmidt and Martin have proven the weak convergence result for $M_n$ for $n\to\infty$ by using the construction of the Bolthausen-Sznitman $n$-coalescent via cutting a random recursive tree and embedding the random recursive tree in a Yule process. However, as also hinted at by Goldschmidt and Martin (see \cite[Cor. 3.3, Remark a)]{goldschmidtmartin}), the representation of $M_n$ as a uniformly chosen table in a $CRP(n-1)$ allows to use results about uniform random permutations to prove the convergence part of \ref{thm:minclade} without using the Yule process embedding.
\begin{proof} \textit{(Alternative proof of \ref{thm:minclade})}\\  
First, let's look at the distribution function of $\frac{\log(X_n-1)}{\log (n-1)}$. Let $x\in[0,1]$. Using Lemma \ref{lem:distclade} a), we get 
\begin{align}\label{eq:cladesizebyESF}
 P\left(\frac{\log(X_n-1)}{\log (n-1)}\leq x\right)&=P(X_n-1\leq (n-1)^x)=\sum^{\lfloor (n-1)^x\rfloor}_{j=1}E\left(\frac{A^{(n-1)}_j}{K_{n-1}}\right)\nonumber \\
&=E\left(\frac{\sum^{\lfloor (n-1)^x\rfloor}_{j=1}A^{(n-1)}_j}{\sum_{i=1}^{n-1}A^{(n-1)}_i}\right),
\end{align} 
where $A_i^{(n-1)}$ is the number of tables with exactly $i$ customers in a $CRP(n-1)$.
 The functional central limit theorem of DeLaurentis and Pittel \cite{delaurentispittel} (see also Hansen \cite{hansen}) states
 $$\left(\frac{\sum^{\lfloor n^x\rfloor}_{j=1}A^{(n)}_j-x\log n}{\sqrt{\log n}}\right)_{x\in[0,1]}\stackrel{d}{\to}(B_x)_{x\in[0,1]},$$
 in $D[0,1]$ when $n\to\infty$, where $B$ is a standard Brownian motion. This implies 
$$\frac{\sum^{\lfloor n^x\rfloor}_{j=1}A^{(n)}_j}{\log n}\stackrel{p}{\to}x$$
for $x\in[0,1]$. We apply this result to both the nominator and denominator of the right hand side of (\ref{eq:cladesizebyESF}) (inside of $E(\cdot)$) and get
$$\frac{\sum^{\lfloor (n-1)^x\rfloor}_{j=1}A^{(n-1)}_j}{\sum_{i=1}^{n-1}A^{(n-1)}_i}=\frac{\sum^{\lfloor (n-1)^x\rfloor}_{j=1}A^{(n-1)}_j}{\log (n-1)}\frac{\log (n-1)}{\sum_{i=1}^{n-1}A^{(n-1)}_i}\stackrel{p}{\to} x$$ for $n\to\infty$. Since $0\leq \frac{\sum^{\lfloor (n-1)^x\rfloor}_{j=1}A^{(n-1)}_j}{\sum_{i=1}^{n-1}A^{n-1}_i}\leq 1$ for all $x,n$, we have uniform integrability and hence
$$E\left(\frac{\sum^{\lfloor (n-1)^x\rfloor}_{j=1}A^{(n-1)}_j}{\sum_{i=1}^{n-1}A^{(n-1)}_i}\right)\to x$$ for $n\to\infty$ which shows
$$\frac{\log(X_n-1)}{\log (n-1)}\stackrel{d}{\to} U_{[0,1]},$$ where $U_{[0,1]}$ is the uniform distribution on $[0,1]$. $\frac{\log(X_n)}{\log n}$ behaves in the same way which can be shown with a Slutski argument.
\end{proof}
\end{remark}
For the asymptotics of moments of $X_n$ (as well as $M_n$ and $RT_{n}$), we use the expression for $P_{X_n}$ from Lemma \ref{lem:distclade} c), namely
$$P(X_n=j+1)=\frac{1}{j}E\left(\frac{1}{1+K_{n-1-j}}\right),$$
where $K_n$ is the number of occupied tables in a $CRP(n)$, we will be ab. Note that $K_n$ also gives the number of cycles in a uniform permutation of $\nm$. Thus, the distribution of $K_n$ is given by
\begin{equation}\label{eq:distknstirling}
P(K_n=k)=\frac{S_{n,k}}{n!} \ \mbox{   for }k\in [n],
\end{equation}
where $(S_{n,k})_{k\in[n],n\in\nz}$ denote the absolute Stirling numbers of the first kind.\\
It is well-known that (see, e.g., \cite[Eq. 3.2]{pitmancombstoch})
\begin{equation}\label{eq:knstronglaw}
\frac{K_n}{\log n}\to 1 \ \ \mbox{almost surely}
\end{equation}
for $n\to\infty$. Since we want to use Lemma \ref{lem:distclade} c), we're more interested in the behaviour of $E((1+K_n)^{-1})$. From (\ref{eq:knstronglaw}), we immediately get
\begin{equation}\label{eq:kninvstronglaw}
\frac{\log n}{1+K_n}\to 1 \ \ \mbox{almost surely}
\end{equation}   
for $n\to\infty$.
We will need a $L^1$-version of (\ref{eq:kninvstronglaw}).
\begin{lemma}\label{lem:knl1}
$$\frac{\log n}{1+K_n}\to 1 \ \ \mbox{in $L^1$ for $n\to\infty$}.$$
\end{lemma}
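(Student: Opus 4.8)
The plan is to upgrade the almost sure convergence \eqref{eq:kninvstronglaw} to $L^1$ convergence by combining it with convergence of the first moments. Write $Y_n:=\frac{\log n}{1+K_n}\geq 0$; by \eqref{eq:kninvstronglaw} we have $Y_n\to 1$ almost surely. For a sequence of nonnegative random variables that converges almost surely, convergence in $L^1$ is equivalent to convergence of the means to the mean of the limit (this follows from Fatou's lemma applied to $Y_n$ and to $(1-Y_n)^+$, or equivalently from a Scheff\'e-type argument). Hence it suffices to prove $E(Y_n)\to 1$, that is $E\bigl(\frac{1}{1+K_n}\bigr)\sim(\log n)^{-1}$ as $n\to\infty$.

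To compute this expectation I would use the elementary identity $\frac{1}{1+K_n}=\int_0^1 x^{K_n}\,dx$ together with the probability generating function of $K_n$. Since $K_n$ is distributed as $\sum_{i=1}^n B_i$ for independent Bernoulli variables $B_i$ with success probability $\frac{1}{i}$ (see Lemma \ref{lem:distclade} c) and \eqref{eq:distknstirling}), we get $E(x^{K_n})=\prod_{i=1}^n\frac{x+i-1}{i}=\frac{\Gamma(n+x)}{\Gamma(x)\,n!}$. Interchanging expectation and integral (everything nonnegative, so Tonelli applies) yields the exact formula $E\bigl(\frac{1}{1+K_n}\bigr)=\int_0^1\frac{\Gamma(n+x)}{\Gamma(x)\,n!}\,dx$.

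It then remains to evaluate this integral asymptotically. First I would replace $\frac{\Gamma(n+x)}{n!}=\frac{\Gamma(n+x)}{\Gamma(n+1)}$ by its order $n^{x-1}$; the substitution $u=(1-x)\log n$ turns $n^{x-1}$ into $e^{-u}$ and $dx$ into $(\log n)^{-1}\,du$, so the integral becomes $\frac{1}{\log n}\int_0^{\log n}\frac{e^{-u}}{\Gamma(1-u/\log n)}\,du$. Because $1/\Gamma\leq 1$ on $(0,1]$, the integrand is dominated by $e^{-u}$, and for fixed $u$ the factor $\Gamma(1-u/\log n)^{-1}\to 1$; dominated convergence then gives $\int_0^{\log n}\frac{e^{-u}}{\Gamma(1-u/\log n)}\,du\to\int_0^\infty e^{-u}\,du=1$, so that $E\bigl(\frac{1}{1+K_n}\bigr)\sim(\log n)^{-1}$, which is exactly what the first paragraph requires.

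The main obstacle is to make the replacement $\frac{\Gamma(n+x)}{\Gamma(n+1)}\approx n^{x-1}$ rigorous \emph{uniformly} in $x\in(0,1)$, not merely pointwise, so that the limit may legitimately be taken inside the integral. I expect Gautschi's inequality, in the form $(n+1)^{x-1}\leq\frac{\Gamma(n+x)}{\Gamma(n+1)}\leq n^{x-1}$ for $0<x<1$, to supply precisely the two-sided uniform control needed: it sandwiches the exact integral between $\int_0^1\frac{(n+1)^{x-1}}{\Gamma(x)}\,dx$ and $\int_0^1\frac{n^{x-1}}{\Gamma(x)}\,dx$, both of which are handled by the same substitution and are asymptotically equivalent to $(\log n)^{-1}$. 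Everything else is routine.
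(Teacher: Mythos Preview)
Your proof is correct and takes a genuinely different route from the paper's. The paper establishes $L^1$ convergence by proving uniform integrability of $\frac{\log n}{1+K_n}$ directly: it bounds $\int_{\{\log n/K_n\geq A\}}\frac{\log n}{K_n}\,dP$ by rewriting $P(K_n=k)=S_{n,k}/n!$ via absolute Stirling numbers of the first kind, then invokes Hwang's uniform asymptotic expansion for $S_{n,k}$ (for $k\leq A^{-1}\log n$) to compare with a Poisson$(\log n)$ tail, and finishes with the weak law of large numbers for i.i.d.\ Poisson(1) summands. You instead bypass uniform integrability by showing $E(Y_n)\to 1$, which together with $Y_n\to 1$ a.s.\ and $Y_n\geq 0$ gives $L^1$ convergence by the Scheff\'e-type argument you sketch; the mean is computed from the exact generating-function identity $E\bigl(\frac{1}{1+K_n}\bigr)=\int_0^1\frac{\Gamma(n+x)}{\Gamma(x)\,n!}\,dx$ and the Gautschi (log-convexity) bounds $(n+1)^{x-1}\leq\frac{\Gamma(n+x)}{\Gamma(n+1)}\leq n^{x-1}$. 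What your approach buys is that it is more self-contained, avoiding the nontrivial external input of Hwang's Stirling asymptotics, and it produces an exact closed form for $E((1+K_n)^{-1})$ as a side benefit. What the paper's approach buys is the stronger conclusion of uniform integrability itself, which would be reusable for other statements (e.g.\ convergence along subsequences or random indices).
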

\begin{proof}
The result follows from  (\ref{eq:kninvstronglaw}) and the uniform integrability of $\frac{\log n}{1+K_n}$, which we show now. Note that since $\frac{\log n}{1+K_n}\leq \frac{\log n}{K_n}$ for all $n\in\nz$, it suffices to show uniform integrability for $\frac{\log n}{K_n}$. Let $A>0$ and $H_n\stackrel{d}{=}Pn(\log n)$ be a Poisson-distributed random variable with parameter $\log n$. Note that $H_n\stackrel{d}{=}\sum_{i\in[\log n]}H^{(1)}_i$, where $(H^{(1)}_i)_{i\in\nz}$ are i.i.d. with $H^{(1)}_1\stackrel{d}{=}Pn(1)$. For $A>1$, we have
\begin{align*}
\int_{\left\{\frac{\log n}{K_n}\geq A\right\}}\left|\frac{\log n}{K_n}\right| dP &= \log n\sum_{k=1}^{A^{-1}\log n}\frac{1}{k}P(K_n=k)\\
&\stackrel{(\ref{eq:distknstirling})}{=}\log n\sum_{k=1}^{A^{-1}\log n}\frac{S_{n,k}}{n!k}\\
&=\sum_{k=1}^{A^{-1}\log n}\frac{(\log n)^k}{k!}e^{-\log n}\left(\frac{1}{\Gamma(1+r)}+O\left(\frac{k}{(\log n)^2}\right)\right)\\
&\leq CP\left(H_n\leq \frac{\log n}{A}\right)
\\&=P\left(\frac{\sum_{i\in\log n}H^{(1)}_i}{\log n}\leq A^{-1}\right)\to 0
\end{align*}
for $n\to\infty$, where $r=(k-1)(\log n)^{-1}$ and $C$ is a suitable constant. Here, we use the uniform asymptotic expansion from Hwang (see Theorem 2 in \cite{Hwang}) for the absolute Stirling numbers $S_{n,k}$ of the first kind for $1\leq k\leq A^{-1}\log n$ (we actually use the cruder version from \cite[Eq. 1.30]{arratiabarbourtavare}). The convergence to 0 follows from the law of large numbers for $(H^{(1)}_i)_{i\in\nz}$.\\
This computation shows the uniform integrability of $\frac{\log n}{1+K_n}$ and thus the lemma. 
\end{proof}

\begin{theorem}
For $n\in\left\{2,3,...\right\}$, let $X_n$ be the minimal clade size in the Bolthausen-Sznitman $n$-coalescent. For all $k\in\nz$, we have
$$\frac{\log n}{n^k}E(X_n^k)\to \frac{1}{k}$$ 
for $n\to\infty$.\\
\end{theorem}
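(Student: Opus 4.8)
The plan is to start from the exact law in Lemma \ref{lem:distclade} c), which, after abbreviating $a_m := E\!\left(\frac{1}{1+K_m}\right)$, gives
\[
E(X_n^k) \;=\; \sum_{j=1}^{n-1}(j+1)^k\,\P(X_n=j+1) \;=\; \sum_{j=1}^{n-1}\frac{(j+1)^k}{j}\,a_{n-1-j}.
\]
The first input is that Lemma \ref{lem:knl1} upgrades to a statement about means: the $L^1$-convergence $\frac{\log m}{1+K_m}\to 1$ forces $E\!\left(\frac{\log m}{1+K_m}\right)\to 1$, i.e.\ $a_m\log m\to 1$, so $a_m=\frac{1+o(1)}{\log m}$ as $m\to\infty$. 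The guiding heuristic is then that the bulk of the sum comes from indices $j$ of order $n$, for which $m=n-1-j$ is also of order $n$ and hence $a_{n-1-j}\approx\frac{1}{\log n}$; since $\frac{1}{n^k}\sum_{j=1}^{n-1}\frac{(j+1)^k}{j}\to\int_0^1 t^{k-1}\,dt=\frac1k$ is an elementary Riemann sum, one expects $\frac{\log n}{n^k}E(X_n^k)\to\frac1k$.

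To make this rigorous I would fix $\delta\in(0,1)$ and split the sum according to whether $m=n-1-j$ is smaller or larger than $n^{1-\delta}$. For the ``near'' range $m<n^{1-\delta}$ (that is, $j>n-1-n^{1-\delta}$) I would bound crudely, using $a_m\le a_0=1$ and $\frac{(j+1)^k}{j}\le C n^{k-1}$; there are at most $n^{1-\delta}$ such terms, so this block is $O(n^{k-\delta})$ and contributes $O(n^{-\delta}\log n)\to 0$ after multiplication by $\frac{\log n}{n^k}$. For the ``bulk'' range $m\ge n^{1-\delta}$ all indices $m$ tend to infinity, so $a_m\log m\to 1$ holds uniformly there, and moreover $1\le\frac{\log n}{\log m}\le\frac{1}{1-\delta}$. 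Writing $\log n\cdot a_m=\frac{\log n}{\log m}\,(a_m\log m)$, these two facts sandwich $\log n\cdot a_m$ between $1-o(1)$ and $\frac{1}{1-\delta}(1+o(1))$ uniformly over the bulk.

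Combining with the Riemann-sum limit $\frac{1}{n^k}\sum_{j=1}^{n-1}\frac{(j+1)^k}{j}\to\frac1k$ (the omitted near-terms also being negligible in this sum, by the same $O(n^{-\delta})$ estimate), the squeeze yields
\[
\frac1k\;\le\;\liminf_n\frac{\log n}{n^k}E(X_n^k)\;\le\;\limsup_n\frac{\log n}{n^k}E(X_n^k)\;\le\;\frac{1}{1-\delta}\cdot\frac1k
\]
for every $\delta\in(0,1)$. Letting $\delta\downarrow 0$ collapses the bounds to $\frac1k$, which proves the theorem. The main obstacle is precisely the non-uniformity of the approximation $a_m\approx\frac{1}{\log m}$ across the summation range: for $j$ close to $n$ the index $m=n-1-j$ stays small and $a_m$ is of constant order rather than $\frac{1}{\log n}$. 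The two devices above---discarding the $O(n^{1-\delta})$ small-$m$ terms as negligible after scaling, and controlling the ratio $\frac{\log n}{\log m}$ on the bulk before sending $\delta\to 0$---are what turn the heuristic into a proof; since the same exact law holds for $M_n$ and $RT_n$, the identical argument applies to them.
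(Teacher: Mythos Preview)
Your argument is correct. Both proofs start from Lemma \ref{lem:distclade} c) and use Lemma \ref{lem:knl1} to get $a_m\sim 1/\log m$, but then diverge. The paper works with $E((X_n-1)^k)=\sum_{j}j^{k-1}a_{n-1-j}$, substitutes $l=n-1-j$, expands $(n-1-l)^{k-1}$ binomially, and invokes Karamata's Tauberian theorem for power series to evaluate each $\sum_l l^i a_l$; the answer $1/k$ then drops out of the combinatorial identity $\sum_{i=0}^{k-1}\binom{k-1}{i}(-1)^i/(i+1)=1/k$, and a final binomial step passes from $(X_n-1)^k$ to $X_n^k$. You instead treat $E(X_n^k)$ directly, split the sum at $m=n^{1-\delta}$, bound the short range crudely, control $\log n\cdot a_m$ on the bulk via the sandwich $1\le\log n/\log m\le 1/(1-\delta)$, and recognise the remaining factor as a Riemann sum for $\int_0^1 t^{k-1}\,dt$. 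Your route is more elementary and self-contained: it avoids the Tauberian machinery, the binomial expansion, and the auxiliary identity, and it makes transparent that the limit $1/k$ is really just $\int_0^1 t^{k-1}\,dt$. The paper's approach has the advantage of packaging the asymptotic summation step into a citable theorem, and its intermediate conclusion $\sum_{l\le n} l^i a_l\sim n^{i+1}/((i+1)\log n)$ is a slightly sharper statement that could be reused elsewhere.
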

Again, this theorem is also true for $M_n$ and $RT_n$ instead of $X_n$.
\begin{proof}
Using Lemma \ref{lem:distclade} c), we get
\begin{align*}
E((X_n-1)^k)&=\sum_{j=1}^{n-1}j^{k-1}E\left(\frac{1}{1+K_{n-1-j}}\right)\\
&=\sum_{l=0}^{n-2}(n-1-l)^{k-1}E\left(\frac{1}{1+K_l}\right)\\
&=\sum_{i=0}^{k-1}\binom{k-1}{i}(n-1)^{k-1-i}(-1)^i\sum_{l=0}^{n-2}l^iE\left(\frac{1}{1+K_l}\right)
\end{align*}
We will now use Karamata's Tauberian theorem for power series (see \cite[Corr. 1.7.3]{binghamteugelsgoldie}). It states (among other things) that if $a_l\sim \frac{c}{\Gamma(\rho)}l^{\rho-1}\mathcal{L}(l)$ for $n\to\infty$, where $c,\rho>0$ and $\mathcal{L}$ is a slowly varying function, then $\sum_{k\in[n]}a_k\sim \frac{c}{\Gamma(1+\rho)}n^{\rho}\mathcal{L}(n)$. We define $a_l:=l^iE\left(\frac{1}{1+K_l}\right)$. Note that $a_l\sim \frac{l^i}{\log l}$ for $l\to\infty$ due to Lemma \ref{lem:knl1}, which enables us to use the Tauberian theorem for $a_l$ with $c:=\Gamma(i+1)=i!$, $\rho=i+1$ and $\mathcal{L}(n)=(\log n)^{-1}$. For each $i\in[k-1]_{0}$, we thus have  
$$\sum_{l=0}^{n-2}l^iE\left(\frac{1}{1+K_l}\right)\sim \frac{1}{i+1} \frac{n^{i+1}}{\log n}$$ for $n\to\infty$. This shows 
\begin{align*}
\frac{\log n}{n^{k}}E((X_n-1)^k)&= \sum_{i=0}^{k-1}\binom{k-1}{i}(n-1)^{k-1-i}\frac{\log n}{n^{k}}(-1)^i\sum_{l=0}^{n-2}l^iE\left(\frac{1}{1+K_l}\right)\\
&\sim \sum_{i=0}^{k-1}\binom{k-1}{i}\frac{\log n}{n^{i+1}}(-1)^i\frac{1}{i+1} \frac{n^{i+1}}{\log n}\\
&=\sum_{i=0}^{k-1}\binom{k-1}{i}\frac{(-1)^i}{i+1}=\frac{1}{k}
\end{align*}
for $n\to\infty$, where the last equation follows by elementary calculations. Thus, for each $k\in\nz$, we have established $$\frac{\log n}{n^k}E((X_n-1)^k)\to \frac{1}{k}$$
for $n\to\infty$.
The theorem now is proven as
$$\frac{\log n}{n^k}E(X_n^k)=\sum_{i\in[k]_0}\binom{k}{i}\frac{\log n}{n^k}E((X_n-1)^i)\to \frac{1}{k}$$
for $n\to\infty$
\end{proof}

\begin{remark}
 This last result fits well with the notion that $X_n$ can heuristically be seen as $n^
U$ when $n$ is big ($U$ uniformly distributed on $[0,1]$) following from Theorem \ref{thm:minclade}, since the $k$th moment of $n^U$ is $\frac{n^k}{k\log n}$.\\
We compare this heuristic to the results for $X_n$ in Kingman's $n$-coalescent from \cite[p.4]{blumfrancois}, where the authors state that $X_n$, without scaling, converges to a Yule distribution of parameter $\rho=2$. So in the Bolthausen-Sznitman $n$-coalescent, the minimal clade size is much bigger than in Kingman's coalescent. This agrees with the more starlike shape of a non-Kingman $n$-coalescent compared to Kingman's $n$-coalescent.
\end{remark}

\begin{acknowledgement}
We thank an anymonous referee to point out the connection of the minimal clade with the non-relatives of 1 in the last coalescence event of the Bolthausen-Sznitman $n$-coalescent and hence the proof of the convergence part of Theorem \ref{thm:minclade}. We thank J.-S. Dhersin, M. M\"ohle and L. Yuan for a fruitful discussion concerning the alternative proof of Theorem \ref{thm:minclade} and also E. Teufl for advice concerning which journal to submit to.   
\end{acknowledgement}

Fabian Freund

Crop Plant Biodiversity and Breeding Informatics Group (350b), Institute of Plant Breeding, Seed Science and Population Genetics, University of Hohenheim, Fruwirthstrasse 21, 70599 Stuttgart, Germany

Email address: fabian.freund@uni-hohenheim.de
\\

Arno Siri-J\'egousse

CIMAT, A.C., Calle Jalisco s/n, Col. Mineral de Valenciana, 36240 Guanajuato, Guanajuato, Mexico

Email address: arno@cimat.mx
 \end{document}